\documentclass[10pt]{article}
\usepackage{amsmath, amsthm}\usepackage{enumerate}
\usepackage{amssymb}\usepackage{bbold}
\usepackage{color}
\newtheorem{theorem}{Theorem}%[section]
\newtheorem{definition}{Definition}%[section] 
\newtheorem{exam}{Example}%[section]
%[section]

%[section]
\newtheorem{prop}{Proposition}%[section]
\newtheorem{lem}{Lemma}%[section]
\newtheorem{cor}{Corollary}%[section]
%[section]
%[section]

\DeclareMathOperator{\convo}{\xrightarrow[]{o}}

\DeclareMathOperator{\convn}{\xrightarrow[]{\|\cdot\|}}

\makeatletter
\renewcommand{\subsection}{\@startsection{subsection}{1}
{0pt}{3.25ex plus 1ex minus.2ex}{-1em}{\normalfont\normalsize\bf}}
\makeatother
\begin{document}

\title{{\bf Algebras of Lebesgue and KB regular operators in Banach lattices}}
\date{}
\maketitle
\author{\centering{{Eduard Emelyanov$^{1}$\\ 
\small $1$ Sobolev Institute of Mathematics, Novosibirsk, Russia}

\abstract{It is shown that the regularly Lebesgue and regularly (quasi) $KB$ operators 
in a Banach lattice $E$ form operator algebras and, for Dedekind complete $E$, 
even Banach lattice algebras. We also prove that a Banach lattice $E$ with order continuous 
norm is a $KB$-space iff each positive compact operator in $E$ is a $KB$ operator.}\\

{\bf{Keywords:}} {\rm Banach lattice, regularly P-operator, Lebesgue operator, KB-operator}\\

{\bf MSC2020:} {\rm 46A40, 46B42, 47L05}
\large

\bigskip

%%%%%%%%%%%%%%%%%%%%
\section{Introduction}
%%%%%%%%%%%%%%%%%%%%

Several operator versions of Banach lattice properties 
like the property to be a $KB$-space were introduced and 
studied in recent papers \cite{AlEG,BA,JAM}. 
The main idea of such operator versions lies in 
a redistribution of topological and order properties 
between domain and range of operators. Since 
the order convergence is not topological in infinite 
dimensional vector lattices \cite{DEM}, the important 
operator versions emerge when order and norm convergences 
are involved simultaneously. The case of regular operators 
acting in a Banach lattice is of special interest. 

\subsection{} 
In the present paper, $E$ is a real Banach lattice and $\text{\rm L}(E)$
is the algebra of all continuous linear operators in $E$.
The space $\text{\rm L}_r(E)$ of regular 
operators in $E$ is an ordered subalgebra of $\text{\rm L}(E)$. 
In general, $\text{\rm L}_r(E)$ is not a vector lattice. 
However, $\text{\rm L}_r(E)$ is a Banach space 
under the {\em regular norm} 
$\|T\|_r=\inf\{\|S\|:\pm T\le S\in\text{\rm L}_r(E)\}$ \cite[Proposition 1.3.6]{Mey}.
%Furthermore, for every $x\in E$,
%$\|T\|_r=\inf\{\|S\|: S\in\text{\rm L}(E), |Tx|\le S|x|\ \forall x\in E\}$
%whenever $T\in{\cal L}_r(E)$. 
If $E$ is Dedekind complete, then $(\text{\rm L}_r(E),\|\cdot\|_r)$
is a Banach lattice algebra, and 
$$
   \|T\|_r=\|~|T|~\| \ \ \ \ \ (\forall T\in\text{\rm L}_r(E)).
   \eqno(1)
$$

\subsection{}
The following is an adopted variant of Definition 1.1 
from \cite{AlEG} in the Banach lattice setting.

\begin{definition}\label{order-to-topology} {\em
An operator $T\in\text{\rm L}(E)$ is said to be:
\begin{enumerate}[$(i)$]
\item \
{\em Lebesgue} ({\em $\sigma$-Lebesgue}) if $\|Tx_\alpha\|\to 0$ 
for every net (sequence) $(x_\alpha)_\alpha$ in $E$ such that 
$x_\alpha\downarrow 0$.
The set of all such operators is denoted by $\text{\rm L}_{Leb}(E)$ 
(resp., $\text{\rm L}^\sigma_{Leb}(E)$).
\item \  
{\em $on$-continuous} ({\em $on\sigma$-continuous}) if 
$\|Tx_\alpha\|\to 0$ for every net (sequence) $(x_\alpha)_\alpha$ in $E$ 
such that $x_\alpha\convo 0$ (cf., \cite[p.1315]{JAM}).
The set of all such operators is denoted by $\text{\rm L}_{on}(E)$ 
(resp., $\text{\rm L}^\sigma_{on}(E)$).
\item \  
{\em $KB$} ({\em $\sigma$-$KB$}) if, for every norm-bounded 
increasing net (sequence) $(x_\alpha)_\alpha$ in $E_+$,
there exists (not necessarily unique) $x\in E$ such that $\|Tx_\alpha-Tx\|\to 0$.
The set of all such operators is denoted by $\text{\rm L}_{KB}(E)$ 
(resp., $\text{\rm L}^\sigma_{KB}(E)$).
\item \  
{\em quasi $KB$} ({\em quasi $\sigma$-$KB$}) if $T$ takes 
norm-bounded increasing nets (sequences) in $E_+$ to norm Cauchy nets.
The set of all such operators is denoted by $\text{\rm L}_{qKB}(E)$ 
(resp., $\text{\rm L}^\sigma_{qKB}(E)$) (cf. \cite[Definition 1.1]{BA}).
\end{enumerate}
}
\end{definition}
\noindent
The identity operator in $c_0$ is Lebesgue yet not $KB$.
The next example shows that $KB$ operators also need 
not to be Lebesgue.

\begin{exam}\label{Exam all bounded real functions on [0,1]}%%%OK
{\em
Let $E$ be a Banach lattice of all bounded real functions on $[0,1]$ 
that differ from a constant on at most countable subset of $[0,1]$. 
Let $T:E \to E$ be an operator that assigns to each $f\in E$ the constant function 
$Tf$ on $[0,1]$ such that the set $\{x\in [0,1] : f(x) \ne (Tf)(x)\}$ is at most 
countable. Since $T$ is rank one and continuous, $T$ is $KB$. 
Consider the net $(f_\alpha)_\alpha$ in $E$, indexed by 
finite subsets of $[0,1]$, defined by: 
$ 
  f_\alpha(x) = \left\{
  \begin{array}{ccc}
   1 &\text{ if } & x \not\in \alpha\\
   0 &\text{ if } & x \in \alpha
  \end{array}
\right..
$
Then $f_\alpha \downarrow 0$ in $E$, yet $\|f_\alpha \|_\infty=1$ 
for all $\alpha$. Thus $T$ is not a Lebesgue operator. 
However, $T$ is $\sigma$-Lebesgue.
}
\end{exam}
\noindent
The following example shows that quasi $KB$ operators need not to be $KB$.

\begin{exam}\label{quasi KB yet not KB}%%%OK
{\em
Let $T$ be an operator in $E=C[0,1]\oplus L_1[0,1]$ defined by
$T\bigl((\phi,\psi)\bigl)=(0,\phi)$ for $\phi\in C[0,1]$ and $\psi\in L_1[0,1]$. 
Clearly, $T$ is 
a quasi $KB$ operator. But $T$ is not $KB$.
Indeed, let a continuous function $\phi_n$ be equal 
to 1 on $[0,\frac{1}{2}-\frac{1}{2^n}]$,
to 0 on $[\frac{1}{2},1]$, and linear otherwise.
Let $f_n:=(\phi_n,0)$. Then $f_n\uparrow$ in $E$ with  
$\|f_n\|\le 1$ for all $n\in\mathbb{N}$ and $Tf_n\convn(0,g)$,
where $g\in L_1[0,1]$ is the indicator function of $[0,\frac{1}{2}]$.
Since $g\not\in C[0,1]$, there is no element $x$ of $E$ satisfying
$Tx=(0,g)$, and hence $T$ is not $KB$.
}
\end{exam}

\subsection{}
The operators belonging to the linear span of positive compact operators
({\rm r-}compact operators) were investigated by Z.L.~Chen and A.W.~Wickstead in \cite{CW97}.
The class of {\rm r-}compact operators is closed under neither the operator nor
regular norm. However, there is a special norm on {\rm r-}compact operators
that makes this class closed (see \cite[Proposition 2.2]{CW97}). This norm occurs 
in application of Proposition 1.3.6 of \cite{Mey} to compact rather 
than continuous operators. The general case, that is a quite straightforward 
extension of \cite[Proposition 1.3.6]{Mey}, is described in
Lemma \ref{P-norm} below. First, we need the following notion.

\begin{definition}\label{rP-operators} {\em
Given a set {\rm P} of operators in $E$
$($briefly, the set {\rm P} consists of {\rm P}-operators$)$. An operator
$T:E\to E$ is called a {\em regularly {\rm P}-operator} $($shortly, an
{\em {\rm r-P}-operator}$)$, whenever there exist two positive  
{\rm P}-operators $T_1,T_2$ such that $T=T_1-T_2$.
}
\end{definition}
\noindent
As examples of regularly P-operators one can consider: 
r-{\em positive} and r-{\em regular operators} 
that are just regular; r-{\em finite rank operators}; 
r-{\em $\sigma$-Lebesgue};
r-{\em compact}; r-$KB$ {\em operators}; etc. 

The importance of regularly P-operators in the present paper is justified by the fact that
most of the key results below, like Propositions \ref{useful}, \ref{prop2}, yields for
\text{\rm r-P}-operators.

\begin{lem}\label{P-norm}
Let ${\rm P}$ be a closed in the operator norm subspace 
of $\text{\rm L}(E)$. Then the formula 
$$
   \|T\|_\text{\rm r-P}=\inf\{\|S\|:\pm T\le S\in{\rm P}\}
   \eqno(2)
$$
defines a norm on the space $\text{\rm r-P}(E)$ of all
\text{\rm r-P}-operators in $E$. Furthermore,
$\|T\|_{\text{\rm r-P}}\ge\|T\|_r\ge\|T\|$ for all $T\in\text{\rm r-P}(E)$, and  
$(\text{\rm r-P}(E),\|\cdot\|_{\text{\rm r-P}})$ \ is a Banach space.
\end{lem}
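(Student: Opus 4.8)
The plan is to mimic the classical proof that $(\text{\rm L}_r(E),\|\cdot\|_r)$ is a Banach space (\cite[Proposition 1.3.6]{Mey}), watching where the hypothesis that ${\rm P}$ is a norm-closed subspace is really used. First I would check that $(2)$ is meaningful: if $T=T_1-T_2$ with $T_1,T_2\ge 0$ in ${\rm P}$, then $S:=T_1+T_2$ lies in ${\rm P}$ (a subspace), is positive, and satisfies $\pm T\le S$, so the set in $(2)$ is nonempty and the infimum is a finite nonnegative number. Conversely, every $S$ with $\pm T\le S$ is positive, since $2S=(S-T)+(S+T)\ge 0$; such an $S$ is therefore regular, i.e.\ $S\in\text{\rm L}_r(E)$. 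Hence $\{S\in{\rm P}:\pm T\le S\}\subseteq\{S\in\text{\rm L}_r(E):\pm T\le S\}$, which yields $\|T\|_{\text{\rm r-P}}\ge\|T\|_r$, while $\|T\|_r\ge\|T\|$ is the standard estimate (for $\pm T\le S$ one gets $|Tx|\le S|x|$, hence $\|Tx\|\le\|S\|\,\|x\|$). The norm axioms are then routine: absolute homogeneity and subadditivity follow by scaling and adding dominating operators (using again that ${\rm P}$ is a subspace), and $\|T\|_{\text{\rm r-P}}=0$ forces $\|T\|=0$, hence $T=0$. I would also observe that $\text{\rm r-P}(E)$ is a linear subspace of $\text{\rm L}(E)$ and, since ${\rm P}$ is a subspace, $\text{\rm r-P}(E)\subseteq{\rm P}$.

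For completeness I would use the standard subsequence device. Given a $\|\cdot\|_{\text{\rm r-P}}$-Cauchy sequence it suffices to show that some subsequence converges, so I pass to $(T_n)$ with $\|T_{n+1}-T_n\|_{\text{\rm r-P}}<2^{-n}$ and choose $S_n\in{\rm P}$ with $\pm(T_{n+1}-T_n)\le S_n$ and $\|S_n\|<2^{-n}$ (so automatically $S_n\ge 0$). Since $\sum_n\|S_n\|<\infty$ and $\text{\rm L}(E)$ is complete, the series $R_m:=\sum_{n\ge m}S_n$ converges in operator norm; because ${\rm P}$ is norm-closed, $R_m\in{\rm P}$; because the positive operators form a norm-closed cone in $\text{\rm L}(E)$ (as $E_+$ is norm-closed), $R_m\ge 0$; and $\|R_m\|\le 2^{-m+1}$.

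Telescoping gives $\pm(T_N-T_m)\le\sum_{n=m}^{N-1}S_n\le R_m$ for all $N>m$. Since $\|\cdot\|\le\|\cdot\|_{\text{\rm r-P}}$, the sequence is also operator-norm Cauchy, so $T_n\to T$ in operator norm for some $T\in\text{\rm L}(E)$; letting $N\to\infty$ in $-R_mx\le(T_N-T_m)x\le R_mx$ for $x\in E_+$ (order intervals of $E$ are norm-closed) yields $\pm(T-T_m)\le R_m$. As $T_n\in\text{\rm r-P}(E)\subseteq{\rm P}$ and ${\rm P}$ is norm-closed, $T\in{\rm P}$, hence $T-T_m\in{\rm P}$; then
\[ 2(T-T_m)=\bigl(R_m+(T-T_m)\bigr)-\bigl(R_m-(T-T_m)\bigr) \]
exhibits $T-T_m$ as a difference of two positive operators of ${\rm P}$, so $T-T_m\in\text{\rm r-P}(E)$ and $T=T_m+(T-T_m)\in\text{\rm r-P}(E)$. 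Finally $\|T-T_m\|_{\text{\rm r-P}}\le\|R_m\|\le 2^{-m+1}\to 0$, so the subsequence converges to $T$ in $\|\cdot\|_{\text{\rm r-P}}$, and a Cauchy sequence with a convergent subsequence converges.

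The step I expect to be the crux is the construction and control of $R_m$: it must simultaneously dominate every tail $T_N-T_m$, remain inside ${\rm P}$, and remain positive, and it is precisely here that norm-closedness of ${\rm P}$ (as opposed to mere order-closedness) and of the positive cones of $\text{\rm L}(E)$ and $E$ are used. The remaining parts only paraphrase \cite[Proposition 1.3.6]{Mey}.
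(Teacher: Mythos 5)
Your argument is correct and follows essentially the same route as the paper: reduce to a subsequence with $\|T_{n+1}-T_n\|_{\text{\rm r-P}}<2^{-n}$, dominate the tails by the norm-convergent series of the $S_n$ (which lies in ${\rm P}$ by norm-closedness and is positive since the positive cone is norm-closed), pass to the operator-norm limit $T$, and use the resulting domination $\pm(T-T_m)\le R_m$ to exhibit $T-T_m$ (hence $T$) as a difference of positive ${\rm P}$-operators and to bound $\|T-T_m\|_{\text{\rm r-P}}$. Your way of writing $2(T-T_m)$ as a difference of positive ${\rm P}$-operators is only a cosmetic variant of the paper's decomposition $T=[(T-T_n)+Q_n+G_n]-[R_n+Q_n]$.
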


\begin{proof}
We skip routine checking that $\|\cdot\|_{\text{\rm r-P}}$ is a norm on 
$\text{\rm r-P}(E)$ satisfying $\|\cdot\|_{\text{\rm r-P}}\ge\|\cdot\|_r\ge\|\cdot\|$. 
Take any $\|\cdot\|_{\text{\rm r-P}}$-Cauchy sequence $(T_n)_n$ in $\text{\rm r-P}(E)$.
Then $T_n=G_n-R_n$ for some positive {\rm P}-operators $G_n,R_n$.
We can assume also that $\|T_{n+1}-T_n\|_{\text{\rm r-P}}<2^{-n}$ for all
$n\in\mathbb{N}$. As $\|\cdot\|_{\text{\rm r-P}}\ge\|\cdot\|$, there exists
$T\in\text{\rm L}(E)$ such that $\|T-T_n\|\to 0$.
Since $\text{\rm P}$ is closed in the operator norm, $T\in\text{\rm P}$.
Pick $S_n\in\text{\rm P}$ with $\|S_n\|<2^{-n}$ and $\pm(T_{n+1}-T_n)\le S_n$. Then 
$$
   T_{n+1}x^+-T_nx^+\le Sx^+ \ \ \text{and} \ \ 
   -T_{n+1}x^-+T_nx^-\le Sx^- 
   \eqno(3) 
$$
for each $x\in E$. Summing up the inequalities in (3) gives 
$T_{n+1}x-T_nx\le S_n|x|$. Replacing $x$ by $-x$ 
gives $T_nx-T_{n+1}x\le S_n|x|$, and hence
$$
   |(T_{n+1}-T_n)x|\le S_n|x| \ \ \ (\forall x\in E).
   \eqno(4) 
$$
As $\text{\rm P}$ is closed in the operator norm, 
$Q_n:=\sum\limits_{k=n}^\infty S_k\in\text{\rm P}$ for all $n$.
By (4), we obtain 
$$
   |(T-T_n)x|=\lim\limits_{k\to\infty}|(T_k-T_n)x|\le
$$
$$
   \sum\limits_{k=n}^\infty|(T_{k+1}-T_n)x|\le Q_n|x| \ \ \ \ \ (x\in E), 
$$
and hence $\pm(T-T_n)\le Q_n$. Then
$$
  -Q_n\le(T-T_n)\le Q_n\ \text{\rm and} \ 0\le(T-T_n)+Q_n
$$
for all $n\in\mathbb{N}$. Therefore 
$$
   T=[(T-T_n)+Q_n]+[T_n-Q_n]=
$$
$$
   [(T-T_n)+Q_n+G_n]-[R_n+Q_n]\in\text{\rm r-P}(E),
$$
and hence $(T-T_n)\in\text{\rm r-P}(E)$ for all $n\in\mathbb{N}$.
We conclude from $\|T-T_n\|_{\text{\rm r-P}}\le\|Q_n\|<2^{1-n}$ 
that $T_n\stackrel{\|\cdot\|_{\text{\rm r-P}}}{\longrightarrow}T$.
\end{proof}

%%%%%%%%%%%%%%%%%%%%
\section{Algebraic and lattice properties of regularly Lebesgue and KB operators}
%%%%%%%%%%%%%%%%%%%%

\subsection{}
The following proposition contains a  useful simple observation 
(cf. \cite[Lemma 1]{JAM} and \cite[Lemma 2.1]{AlEG}).

\begin{prop}\label{useful}%%%OK
Every \text{\rm r}-Lebesgue $(resp. \ $\text{\rm r}-$\sigma$-Lebesgue$)$
operator is order continuous $($resp. $\sigma$-con\-ti\-nuous$)$.
\end{prop}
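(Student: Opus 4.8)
The plan is to reduce the statement to a single elementary fact about one positive Lebesgue operator and then pass to differences by the triangle inequality.

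\textbf{Step 1 (positive case).} I would first check that a positive Lebesgue (resp.\ $\sigma$-Lebesgue) operator $S$ satisfies $Sx_\alpha\downarrow 0$ whenever $x_\alpha\downarrow 0$ (resp.\ for sequences). Since $S\ge 0$ and $x_\alpha\downarrow$, the net $(Sx_\alpha)_\alpha$ is decreasing and $\ge 0$, so it only remains to see that its infimum is $0$. If $0\le y\le Sx_\alpha$ for all $\alpha$, then monotonicity of the Banach lattice norm gives $\|y\|\le\|Sx_\alpha\|$ for every $\alpha$; as $S$ is Lebesgue, $\|Sx_\alpha\|\to 0$, whence $\|y\|=0$ and $y=0$. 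Hence $Sx_\alpha\downarrow 0$. (Only monotonicity of $\|\cdot\|$ is used; no order continuity of the norm of $E$ is assumed.)

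\textbf{Step 2 (differences).} Note that a sum of two positive Lebesgue (resp.\ $\sigma$-Lebesgue) operators is again such an operator, because $\|(S_1+S_2)x_\alpha\|\le\|S_1x_\alpha\|+\|S_2x_\alpha\|$. Now let $T$ be r-Lebesgue, write $T=T_1-T_2$ with $T_1,T_2\ge 0$ Lebesgue, and set $S:=T_1+T_2$, a positive Lebesgue operator. Let $x_\alpha\convo 0$ and choose a net $u_\gamma\downarrow 0$ with the property that for each $\gamma$ there is $\alpha_\gamma$ with $|x_\alpha|\le u_\gamma$ for all $\alpha\ge\alpha_\gamma$. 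Using $|T_ix_\alpha|\le T_i|x_\alpha|$ ($i=1,2$) together with the monotonicity of $T_1,T_2$, we get, for $\alpha\ge\alpha_\gamma$,
$$
  |Tx_\alpha|\le |T_1x_\alpha|+|T_2x_\alpha|\le T_1|x_\alpha|+T_2|x_\alpha|=S|x_\alpha|\le Su_\gamma .
$$
By Step 1, $Su_\gamma\downarrow 0$, so the displayed estimate is exactly a witness for $Tx_\alpha\convo 0$. Thus $T$ is order continuous. The $\sigma$-statement is proved verbatim, replacing nets by sequences and the dominating net $(u_\gamma)$ by a dominating sequence $u_k\downarrow 0$, and invoking the $\sigma$-part of Step 1.

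\textbf{Expected main difficulty.} There is essentially no analytic difficulty here; the only point to handle carefully is the bookkeeping of the two index sets in the definition of order convergence — the net $(x_\alpha)$ and its dominating net $(u_\gamma)$ — and keeping straight that the conclusion $Su_\gamma\downarrow 0$ is a genuine statement about the directed family $(u_\gamma)_\gamma$, to which Step 1 applies. Everything else reduces to subadditivity and monotonicity of the lattice norm and the inequality $|Sx|\le S|x|$ for positive $S$.
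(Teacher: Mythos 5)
Your proposal is correct, and it shares the paper's core idea (decompose $T=T_1-T_2$ into positive Lebesgue parts and use the fact that a decreasing net whose norms tend to $0$ must decrease to $0$), but it differs in two worthwhile ways. First, where the paper cites Theorem 2.14 of Aliprantis--Burkinshaw for the key fact, your Step 1 proves it directly from monotonicity of the lattice norm; the only point left implicit is that an arbitrary lower bound $y$ of $(Sx_\alpha)_\alpha$ should be replaced by $y\vee 0$, which is again a lower bound, so that your argument indeed yields $\inf_\alpha Sx_\alpha=0$. Second, the paper's proof only verifies the monotone criterion ($x_\alpha\downarrow 0\Rightarrow Tx_\alpha\convo 0$), which identifies order continuity for regular (hence order bounded) operators via a standard equivalence, while you prove the full definition directly: for any net $x_\alpha\convo 0$ you produce a single majorant $S=T_1+T_2$ and the dominating net $(Su_\gamma)_\gamma\downarrow 0$, so $Tx_\alpha\convo 0$ without invoking that equivalence. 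The paper's route is shorter and leans on a cited result; yours is self-contained, slightly more general, and in fact anticipates the dominating-net technique the paper uses later in the proof of its Proposition on r-$on$-continuity.
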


\begin{proof}
Let $T$ be a regularly Lebesgue operator in $E$,
and let $x_\alpha\downarrow 0$ in $E$. As $T=T_1-T_2$ with
both $T_1$ and $T_2$ positive Lebesgue operators,  
$T_1x_\alpha\downarrow$ and $\|T_1x_\alpha\|\to 0$. 
By \cite[Theorem 2.14]{AB1}, $T_1x_\alpha\downarrow 0$. 
Also, $T_2x_\alpha\downarrow 0$. Then $Tx_\alpha\convo 0$.
The \text{\rm r}-$\sigma$-Lebesgue case is similar.
\end{proof}

\begin{lem}\label{L1}%%%OK
Let a net $(T_\gamma)_\gamma$ in $\text{\rm L}_{Leb}(E)$ 
satisfy $T_\gamma\stackrel{\|\cdot\|}{\longrightarrow}T$
for some $T\in\text{\rm L}(E)$. Then $T\in\text{\rm L}_{Leb}(E)$.
\end{lem}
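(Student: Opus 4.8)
The plan is a routine $\varepsilon/2$ argument: the operator-norm convergence $T_\gamma\to T$ lets one replace $T$ by a genuinely Lebesgue operator $T_\gamma$ up to a uniformly small error on any norm-bounded family, and a net decreasing to $0$ is --- after fixing one index --- dominated by a single element of $E_+$, hence norm-bounded, because the norm of a Banach lattice is monotone.

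First I would fix a net $(x_\alpha)_\alpha$ in $E$ with $x_\alpha\downarrow 0$ and an $\varepsilon>0$; by definition of norm convergence it suffices to exhibit an index past which $\|Tx_\alpha\|<\varepsilon$. Choosing any index $\alpha_0$ and setting $u:=x_{\alpha_0}\in E_+$, monotonicity of the lattice norm gives $\|x_\alpha\|\le\|u\|$ for every $\alpha\ge\alpha_0$, since $0\le x_\alpha\le u$ there.

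Next, using $T_\gamma\stackrel{\|\cdot\|}{\longrightarrow}T$, I would pick a single index $\gamma$ with $\|T-T_\gamma\|<\varepsilon/\bigl(2(\|u\|+1)\bigr)$, so that $\|(T-T_\gamma)x_\alpha\|\le\|T-T_\gamma\|\,\|x_\alpha\|<\varepsilon/2$ for all $\alpha\ge\alpha_0$. Since $T_\gamma\in\text{\rm L}_{Leb}(E)$ and $x_\alpha\downarrow 0$, there is $\alpha_1$ with $\|T_\gamma x_\alpha\|<\varepsilon/2$ whenever $\alpha\ge\alpha_1$. Picking $\alpha_2$ above both $\alpha_0$ and $\alpha_1$ in the directed index set, the triangle inequality yields $\|Tx_\alpha\|\le\|(T-T_\gamma)x_\alpha\|+\|T_\gamma x_\alpha\|<\varepsilon$ for $\alpha\ge\alpha_2$. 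Hence $\|Tx_\alpha\|\to 0$ and $T\in\text{\rm L}_{Leb}(E)$; the $\sigma$-version is verbatim the same with sequences in place of nets.

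There is no genuine obstacle here. The only subtlety worth flagging is that a decreasing net need not be globally norm-bounded, so one must first absorb its tail into the order interval $[0,u]$ before the operator-norm estimate becomes applicable; once that is done, the proof is the familiar ``a uniform limit of well-behaved maps is well-behaved'', and in particular it uses neither Dedekind completeness of $E$ nor any regularity of the approximating operators $T_\gamma$.
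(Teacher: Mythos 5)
Your proposal is correct and is essentially the paper's own argument: both pass to a tail of the net (your order interval $[0,u]$ with $u=x_{\alpha_0}$ is exactly how the paper justifies the bound $\|x_\alpha\|\le M$), replace $T$ by a nearby Lebesgue operator $T_\gamma$, and finish with the triangle inequality. Your explicit remark on why the tail is norm-bounded is a useful clarification of the paper's phrase ``passing to a tail,'' but it is the same proof.
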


\begin{proof}
Let $x_\alpha\downarrow 0$ in $E$. 
Passing to a tail of the net $(x_\alpha)_\alpha$, 
we may assume that $\|x_\alpha\|\le M$ for some $M>0$
and for all $\alpha$. Take an $\varepsilon>0$. 
Let $\gamma_\varepsilon$ be such that 
$\|T_{\gamma_\varepsilon}-T\|\le\frac{\varepsilon}{M}$. 
Since $T_{\gamma_\varepsilon}$ is Lebesgue, 
$\lim_\alpha\|T_{\gamma_\varepsilon}x_\alpha\|=0$. 
There exists $\alpha_\varepsilon$ with 
$\|T_{\gamma_\varepsilon}x_\alpha\|\le\varepsilon$ 
for $\alpha\ge\alpha_\varepsilon$, and hence
$$
   \|Tx_\alpha\|\le\|T_{\gamma_\varepsilon}x_\alpha\|+ 
   \|T_{\gamma_\varepsilon}x_\alpha-Tx_\alpha\|\le
   \varepsilon+\|T_{\gamma_\varepsilon}-T\| 
   \|x_\alpha\|\le 2\varepsilon
$$
for $\alpha\ge\alpha_\varepsilon$. Since $\varepsilon>0$ is arbitrary, 
$T$ is Lebesgue. 
\end{proof}
\noindent
Recall that P-operators satisfy the {\em domination property} if 
$0\le S\le T\in\text{\rm P}$ implies $S\in\text{\rm P}$.

\begin{theorem}\label{th1}%%%OK
Let $E$ be a Banach lattice. Then $\text{\rm r-L}_{Leb}(E)$ 
and $\text{\rm r-L}_{Leb}^\sigma(E)$ are both left algebraic ideals 
and hence subalgebras of $\text{\rm L}_r(E)$. Furthermore$:$ 
\begin{enumerate}[$i)$]
\item \
$\text{\rm r-L}_{Leb}(E)$
$($resp., $\text{\rm r-L}_{Leb}^\sigma(E)$$)$
coincides with $\text{\rm L}_r(E)$ iff the norm of $E$ is order 
continuous $($resp., $\sigma$-order continuous$)$. 
\item \
If $E$ is Dedekind complete then $\|T\|_{\text{\rm r-P}}=\|T\|_r$
for each $T\in P:=\text{\rm L}_{Leb}(E)$, and 
$$
  (\text{\rm r-L}_{Leb}(E), \ \|\cdot\|_{\text{\rm r-P}}) \ \ \ \ 
  (\text{\rm resp.,} \ 
  (\text{\rm r-L}^\sigma_{Leb}(E), \ \|\cdot\|_{\text{\rm r-P}}))
$$
is a Banach lattice algebra.
\end{enumerate}
\end{theorem}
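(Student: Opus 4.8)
The plan is to establish Theorem \ref{th1} in three stages: first the algebraic-ideal/subalgebra structure, then the characterization in part $i)$, then the Banach-lattice-algebra claim in $ii)$.

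\emph{Algebraic ideal structure.} First I would show that $\text{\rm L}_{Leb}(E)$ is closed under addition and under left multiplication by an arbitrary bounded operator: if $x_\alpha\downarrow 0$ and $S,T\in\text{\rm L}_{Leb}(E)$, then $\|(S+T)x_\alpha\|\le\|Sx_\alpha\|+\|Tx_\alpha\|\to0$; and if $R\in\text{\rm L}(E)$, $T\in\text{\rm L}_{Leb}(E)$, then $\|RTx_\alpha\|\le\|R\|\,\|Tx_\alpha\|\to0$, so $RT\in\text{\rm L}_{Leb}(E)$. (Right multiplication need not preserve the class since $Tx_\alpha$ need not be monotone or even order convergent to $0$ — this is why we only get a \emph{left} ideal.) Passing to the regular span: an \text{\rm r}-Lebesgue operator is a difference of positive Lebesgue operators, so $\text{\rm r-L}_{Leb}(E)$ is a linear subspace of $\text{\rm L}_r(E)$; if $T=T_1-T_2$ and $S=S_1-S_2$ with $T_i,S_i$ positive Lebesgue, then $TS=(T_1-T_2)(S_1-S_2)$ expands into a combination of products $T_iS_j$, each of which is positive and, by the left-ideal property just proved, Lebesgue — hence $TS\in\text{\rm r-L}_{Leb}(E)$, giving the subalgebra claim; and for $R\in\text{\rm L}_r(E)$ with $R=R_1-R_2$, the product $RT$ again expands into positive Lebesgue summands, giving the left-algebraic-ideal claim. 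The $\sigma$-versions are identical with sequences replacing nets.

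\emph{Part $i)$.} The direction ``$\Leftarrow$'' is immediate: if the norm of $E$ is order continuous then $x_\alpha\downarrow0$ forces $\|x_\alpha\|\to0$, so \emph{every} bounded operator is Lebesgue, whence $\text{\rm r-L}_{Leb}(E)=\text{\rm L}_r(E)$. For ``$\Rightarrow$'', suppose the identity operator $I$ is \text{\rm r}-Lebesgue (which follows from $\text{\rm r-L}_{Leb}(E)=\text{\rm L}_r(E)$ since $I\in\text{\rm L}_r(E)$); by Proposition \ref{useful}, $I$ is order continuous, but $I$ being a Lebesgue operator means exactly that $x_\alpha\downarrow0$ implies $\|x_\alpha\|\to0$, which is the order continuity of the norm. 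Again the $\sigma$-case is the verbatim analogue with sequences.

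\emph{Part $ii)$.} Now assume $E$ is Dedekind complete, so $(\text{\rm L}_r(E),\|\cdot\|_r)$ is a Banach lattice algebra and $\|T\|_r=\|\,|T|\,\|$ by $(1)$. Set $P:=\text{\rm L}_{Leb}(E)$. The key point is that $P$ satisfies the domination property: if $0\le S\le T$ with $T\in\text{\rm L}_{Leb}(E)$ and $x_\alpha\downarrow0$, then $0\le Sx_\alpha\le Tx_\alpha$, so $\|Sx_\alpha\|\le\|Tx_\alpha\|\to0$, hence $S\in\text{\rm L}_{Leb}(E)$. Consequently, for $T\in\text{\rm r-L}_{Leb}(E)$ we have $|T|\in\text{\rm L}_r(E)$, and since $0\le T^+,T^-\le|T|$ and $T^+,T^-$ are positive parts in the Dedekind complete lattice $\text{\rm L}_r(E)$ dominated by a Lebesgue operator... here I must be slightly careful: what I actually need is that $|T|$ itself is Lebesgue. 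This follows because $T=T_1-T_2$ with $T_i$ positive Lebesgue gives $|T|\le T_1+T_2$ (a general fact in $\text{\rm L}_r(E)$: $|T_1-T_2|\le T_1+T_2$ for positive $T_1,T_2$), and $T_1+T_2$ is Lebesgue by the additivity established above, so by domination $|T|\in\text{\rm L}_{Leb}(E)$. Therefore in the infimum defining $\|T\|_{\text{\rm r-P}}$ we may take $S=|T|\in P$, giving $\|T\|_{\text{\rm r-P}}\le\|\,|T|\,\|=\|T\|_r$; combined with the reverse inequality from Lemma \ref{P-norm} we get $\|T\|_{\text{\rm r-P}}=\|T\|_r$ on $\text{\rm r-L}_{Leb}(E)$. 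This identifies $(\text{\rm r-L}_{Leb}(E),\|\cdot\|_{\text{\rm r-P}})$ isometrically with a sublattice-subalgebra of $(\text{\rm L}_r(E),\|\cdot\|_r)$: it is a sublattice because $|T|\in\text{\rm L}_{Leb}(E)$ whenever $T\in\text{\rm r-L}_{Leb}(E)$, hence $T^\pm,\ S\vee T,\ S\wedge T$ all stay in the class; it is a subalgebra by the first stage; it is norm-closed (hence a Banach lattice algebra) by Lemma \ref{P-norm}, which says $(\text{\rm r-P}(E),\|\cdot\|_{\text{\rm r-P}})$ is complete for any operator-norm-closed subspace $P$ — and $\text{\rm L}_{Leb}(E)$ is operator-norm closed by Lemma \ref{L1}. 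The $\sigma$-version uses the sequential analogue of Lemma \ref{L1} (same proof) and of the domination/additivity facts.

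\emph{Main obstacle.} I expect the genuinely substantive step to be the lattice-closure part of $ii)$: verifying that $\text{\rm r-L}_{Leb}(E)$ is a \emph{sublattice} of $\text{\rm L}_r(E)$, i.e. that $|T|$ (computed in $\text{\rm L}_r(E)$) lies in $\text{\rm L}_{Leb}(E)$ for every \text{\rm r}-Lebesgue $T$. This is where the domination property of $\text{\rm L}_{Leb}(E)$ together with the Dedekind completeness of $E$ (ensuring $|T|$ exists and equals $T^++T^-$ with $T^+\wedge T^-=0$) must be combined, and where one must be attentive to the distinction between an operator being dominated by a Lebesgue operator versus being a difference of Lebesgue operators. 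Everything else — the left-ideal/subalgebra bookkeeping, the norm-domination inequalities, and the completeness — is either routine or quoted from Lemma \ref{P-norm} and Lemma \ref{L1}.
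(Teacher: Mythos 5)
Your proposal is correct and follows essentially the same route as the paper: left multiplication preserves the Lebesgue property (giving the left-ideal and subalgebra claims), part $i)$ reduces to the identity operator, and part $ii)$ combines the domination property with $|T|\le T_1+T_2$, formula (1), and Lemmas \ref{P-norm} and \ref{L1} to get $\|T\|_{\text{\rm r-P}}=\|T\|_r$ and completeness. The only cosmetic difference is that you inherit the lattice-norm and submultiplicativity properties via the isometric identification with a sublattice-subalgebra of $(\text{\rm L}_r(E),\|\cdot\|_r)$, whereas the paper verifies $|S|\le|T|\Rightarrow\|S\|_{\text{\rm r-P}}\le\|T\|_{\text{\rm r-P}}$ and $\pm ST\le|S||T|$ directly; also, the appeal to Proposition \ref{useful} in part $i)$ is an unnecessary detour, since $I$ being \text{\rm r}-Lebesgue already makes it Lebesgue, which is exactly order continuity of the norm.
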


\begin{proof}
We consider only $\text{\rm r-L}_{Leb}(E)$, 
the case of $\text{\rm r-L}^\sigma_{Leb}(E)$ is similar,
and skip routine direct checking that $\text{\rm r-L}_{Leb}(E)$
is a vector subspace of $\text{\rm L}_r(E)$. 
Let $T\in\text{\rm r-L}_{Leb}(E)$ and $S\in\text{\rm L}_r(E)$. 
In order to show that $\text{\rm r-L}_{Leb}(E)$ is a left 
algebraic ideal, we need to show that $ST$ is r-Lebesgue. 
WLOG, we may assume $S,T\ge 0$.
Let $x_\alpha\downarrow 0$ in $E$. Then $\|Tx_\alpha\|\to 0$, 
$\|S(Tx_\alpha)\|\to 0$, and hence $ST$ is r-Lebesgue. 
As each left ideal is a subalgebra, $\text{\rm r-L}_{Leb}(E)$ 
is a subalgebra of $\text{\rm L}_r(E)$. Note that by the similar argument,
$\text{\rm r-L}_{Leb}(E)$ is a left ideal of $\text{\rm L}(E)$.

$i)$ \
If $\text{\rm r-L}_{Leb}(E)=\text{\rm L}_r(E)$ then the identity 
operator $I$ in $E$ is Lebesgue, and hence the norm in $E$ is order continuous. 
Conversely, if the norm in $E$ is order continuous then 
$I\in\text{\rm r-L}_{Leb}(E)$ and, 
since $\text{\rm r-L}_{Leb}(E)$ is a left ideal of $\text{\rm L}_r(E)$
containing $I$ then $\text{\rm r-L}_{Leb}(E)=\text{\rm L}_r(E)$.

$ii)$ \
By Lemmas \ref{P-norm} and \ref{L1}, 
$(\text{\rm r-L}_{Leb}(E), \ \|\cdot\|_{\text{\rm r-P}})$ is a Banach space.
Let $T\in\text{\rm r-L}_{Leb}(E)$, say $T=T_1-T_2$,
with $0\le T_1,T_2\in\text{\rm L}_{Leb}(E)$. Then $|T|\le T_1+T_2$.
Since the Lebesgue operators satisfy the domination property,
$|T|\in\text{\rm r-L}_{Leb}(E)$. Hence $\text{\rm r-L}_{Leb}(E)$ is a vector lattice,
and $\|T\|_{\text{\rm r-P}}=\|~|T|~\|$.
By formula (1), $\|T\|_{\text{\rm r-P}}=\|T\|_r$. 

For proving that $(\text{\rm r-L}_{Leb}(E), \ \|\cdot\|_{\text{\rm r-P}})$
is a Banach lattice, we need to show 
$$
   |S|\le|T| \ \Rightarrow \ \|S\|_{\text{\rm r-P}}\le\|T\|_{\text{\rm r-P}}
   \ \ \ (\forall S,T\in\text{\rm r-L}_{Leb}(E)).
   \eqno(5)
$$
Let $S,T\in\text{\rm r-L}_{Leb}(E)$ satisfy $|S|\le|T|$.
Since $|S|,|T|\in{\text{\rm r-}}{\text{\cal L}}_{Leb}(E)$ then
$\|S\|_{\text{\rm r-P}}=\|~|S|~\|\le\|~|T|~\|=\|T\|_{\text{\rm r-P}}$,
that proves (5).

It remains to show that the norm $\|\cdot\|_{\text{\rm r-P}}$ is submultiplicative.
Let $S,T\in\text{\rm r-L}_{Leb}(E)$. It follows from
$$
   \pm STx\le|STx|\le|S||Tx|\le|S||T|x \ \ \ (\forall x\in E_+)
$$
that $\pm ST\le |S||T|$. Thus, 
$$
   \|ST\|_{\text{\rm r-P}}\le\|~|S||T|~\|\le
   \|~|S|~\|\cdot\|~|T|~\|=\|S\|_{\text{\rm r-P}}\|T\|_{\text{\rm r-P}},
$$
as desired.
\end{proof}
%\noindent
%The question whether or not the norm $\|\cdot\|_{\text{\rm r-P}}$ is submultiplicative
%for $\text{\rm P}=\text{\rm L}_{Leb}(E)$ in arbitrary Banach lattice $E$ remains open.

\subsection{}
The following fact is an adaptation of \cite[Lemma 1]{JAM} and \cite[Lemma 2.1]{AlEG} 
for regular operators. For convenience, we include its proof.

\begin{prop}\label{cor1}%%%OK
Let $E$ be a Banach lattice. Then 
$$
   \text{\rm r-L}_{Leb}(E)=\text{\rm r-L}_{on}(E) \ \ \text{and} \ \
   \text{\rm r-L}^\sigma_{Leb}(E)=\text{\rm r-L}^\sigma_{on}(E),
   \eqno(6)
$$
where $\text{\rm r-L}_{on}(E)$ 
$($resp., $\text{\rm r-L}^\sigma_{on}(E)$$)$ is the space of all
\text{\rm r-on}-continuous $($resp., \text{\rm r-on}$\sigma$-continuous$)$ operators in $E$.
\end{prop}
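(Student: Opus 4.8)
The plan is to prove each of the two claimed equalities in $(6)$ by a double inclusion, and since order convergence of a net to $0$ is stronger than (indeed implies, for monotone nets) the condition $x_\alpha\downarrow 0$, one inclusion will be immediate while the other is the substantive one. Concretely, since every net with $x_\alpha\downarrow 0$ also satisfies $x_\alpha\convo 0$, the very definitions give $\text{\rm L}_{on}(E)\subseteq\text{\rm L}_{Leb}(E)$, and taking differences of positive operators this yields $\text{\rm r-L}_{on}(E)\subseteq\text{\rm r-L}_{Leb}(E)$; the same remark handles the $\sigma$-versions. So the work is entirely in the reverse inclusion $\text{\rm r-L}_{Leb}(E)\subseteq\text{\rm r-L}_{on}(E)$.

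For the reverse inclusion I would argue as follows. Let $T\in\text{\rm r-L}_{Leb}(E)$, so $T=T_1-T_2$ with $T_1,T_2\ge 0$ Lebesgue. It suffices to show each $T_i$ is $on$-continuous. Fix $i$, write $S=T_i\ge 0$, and let $x_\alpha\convo 0$ in $E$. By the definition of order convergence there is a net $(y_\beta)_\beta$ with $y_\beta\downarrow 0$ such that for every $\beta$ there is $\alpha_0$ with $|x_\alpha|\le y_\beta$ for all $\alpha\ge\alpha_0$. Since $S\ge 0$, this gives $|Sx_\alpha|\le S|x_\alpha|\le Sy_\beta$, hence $\|Sx_\alpha\|\le\|Sy_\beta\|$ for all $\alpha\ge\alpha_0$. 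But $S$ is Lebesgue, so $\|Sy_\beta\|\to 0$; given $\varepsilon>0$ choose $\beta$ with $\|Sy_\beta\|<\varepsilon$ and then $\alpha_0$ as above, so $\|Sx_\alpha\|<\varepsilon$ for $\alpha\ge\alpha_0$. Thus $\|Sx_\alpha\|\to 0$, i.e. $S$ is $on$-continuous. Consequently $T=T_1-T_2\in\text{\rm r-L}_{on}(E)$, proving $\text{\rm r-L}_{Leb}(E)=\text{\rm r-L}_{on}(E)$. The $\sigma$-version is verbatim the same with sequences in place of nets, using that a sequence order-converging to $0$ is dominated by a decreasing sequence tending to $0$ in order.

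The only delicate point is the passage from $x_\alpha\convo 0$ to the domination $|x_\alpha|\le y_\beta$ eventually: one must use the correct definition of order convergence (the existence of an auxiliary decreasing net dominating the tails), and the positivity of $S$ is essential to transport this domination through the operator. I expect this to be the main obstacle only in the sense of being careful about the definition of $\convo$; no deeper fact is needed. Note in particular that, unlike Proposition \ref{useful}, here we do not need the domination property of Lebesgue operators nor \cite[Theorem 2.14]{AB1} — only positivity and the Lebesgue property of the pieces $T_1,T_2$.
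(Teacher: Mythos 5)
Your proposal is correct and follows essentially the same route as the paper: the inclusion $\text{\rm r-L}_{on}(E)\subseteq\text{\rm r-L}_{Leb}(E)$ is dismissed as trivial, and for the converse one reduces to a positive Lebesgue operator and transports the dominating net $y_\beta\downarrow 0$ from the definition of $x_\alpha\convo 0$ through the operator to get $\|Tx_\alpha\|\to 0$. The only cosmetic difference is that you bound $|Tx_\alpha|\le T|x_\alpha|\le Ty_\beta$ directly, while the paper splits into $T(x_\alpha^{+})$ and $T(x_\alpha^{-})$; these are interchangeable.
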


\begin{proof}
We restrict ourselves to the first equality. Since 
$\text{\rm r-L}_{on}(E)\subseteq\text{\rm r-L}_{Leb}(E)$
is trivial, we only need to show  
$\text{\rm r-L}_{Leb}(E)\subseteq\text{\rm r-L}_{on}(E)$.
Let $T\in\text{\rm r-L}_{Leb}(E)$, and let $x_\alpha\convo 0$. 
WLOG, suppose $T\ge 0$. 
There exists a net $y_\beta\downarrow 0$
in $E$ such that, for every $\beta$, there exists $\alpha_\beta$ with
$|x_\alpha|\le y_\beta$ (hence $T|x_\alpha|\le Ty_\beta$)
for $\alpha\ge\alpha_\beta$.
It follows $0 \le T((x_\alpha)^+),T((x_\alpha)^-) \le Ty_\beta$ for 
$\alpha \ge \alpha_\beta$. Then
$\|Ty_\beta\|\to 0$ and hence $\|T((x_\alpha)^\pm)\|\to 0$, 
that implies $\|Tx_\alpha\|\to 0$.
\end{proof}
\noindent
In view of Proposition~\ref{cor1}, one can replace in 
Theorem~\ref{th1} regularly Lebesgue $($$\sigma$-Lebesgue$)$ 
operators by regularly $on$-continuous ($on\sigma$-continuous) operators.

The author doesn't know whether or not one can replace regularly Lebesgue operators
by Lebesgue regular operators in formula (6). More precisely, under what 
conditions on $E$, the equalities
$$
   (\text{\rm L}_{Leb})_r(E)=(\text{\rm L}_{on})_r(E) \ \ \text{\rm and} \ \ 
   (\text{\rm L}^\sigma_{Leb})_r(E)=(\text{\rm L}^\sigma_{on})_r(E)
$$
hold true?  By Proposition~\ref{cor1}, the question has a positive 
answer for a Dedekind complete Banach lattice $E$, where 
$$
   T\in(\text{\rm L}_{Leb})_r(E)\Longrightarrow|T|\in\text{\rm L}_{Leb}(E)
$$
$$
   (\text{\rm resp.,} \ 
   T\in(\text{\rm L}^\sigma_{Leb})_r(E)\Longrightarrow|T|\in\text{\rm L}^\sigma_{Leb}(E)\ ).
$$

\subsection{}
Now we investigate algebraic structure of 
$KB$ and quasi $KB$ operators. We begin with the 
following lemma, that is similar to Lemma \ref{L1} 
(cf. also \cite[Proposition 2.5]{BA}). 

\begin{lem}\label{L2}%%%OK
Let a net $(T_\gamma)_\gamma$ in $\text{\rm L}_{qKB}(E)$ satisfies
$T_\gamma\stackrel{\|\cdot\|}{\longrightarrow}T$
for some $T\in\text{\rm L}(E)$. Then $T\in\text{\rm L}_{qKB}(E)$.
\end{lem}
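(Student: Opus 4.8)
The plan is to mimic the proof of Lemma \ref{L1}, replacing the Lebesgue hypothesis by the quasi $KB$ hypothesis and the conclusion ``$\|Tx_\alpha\|\to0$'' by ``$(Tx_\alpha)_\alpha$ is norm Cauchy''. First I would fix an arbitrary norm-bounded increasing net $(x_\alpha)_\alpha$ in $E_+$, say $\|x_\alpha\|\le M$ for all $\alpha$ (WLOG $M>0$), and an $\varepsilon>0$. Since $T_\gamma\to T$ in the operator norm, I can choose an index $\gamma_\varepsilon$ with $\|T_{\gamma_\varepsilon}-T\|\le\frac{\varepsilon}{3M}$. As $T_{\gamma_\varepsilon}\in\text{\rm L}_{qKB}(E)$, the net $(T_{\gamma_\varepsilon}x_\alpha)_\alpha$ is norm Cauchy, so there is $\alpha_\varepsilon$ such that $\|T_{\gamma_\varepsilon}x_\alpha-T_{\gamma_\varepsilon}x_\beta\|\le\frac{\varepsilon}{3}$ whenever $\alpha,\beta\ge\alpha_\varepsilon$.

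Then, for $\alpha,\beta\ge\alpha_\varepsilon$, a triangle inequality estimate gives
$$
  \|Tx_\alpha-Tx_\beta\|\le\|(T-T_{\gamma_\varepsilon})x_\alpha\|+\|T_{\gamma_\varepsilon}x_\alpha-T_{\gamma_\varepsilon}x_\beta\|+\|(T_{\gamma_\varepsilon}-T)x_\beta\|\le\frac{\varepsilon}{3M}\,M+\frac{\varepsilon}{3}+\frac{\varepsilon}{3M}\,M=\varepsilon .
$$
Hence $(Tx_\alpha)_\alpha$ is norm Cauchy, and since $\varepsilon>0$ was arbitrary and $(x_\alpha)_\alpha$ was an arbitrary norm-bounded increasing net in $E_+$, we conclude $T\in\text{\rm L}_{qKB}(E)$. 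The sequential version is handled by the same argument applied to sequences.

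There is no genuine obstacle here — the argument is the standard ``$\varepsilon/3$'' perturbation trick — but the point worth emphasizing is that the uniform bound $M$ on $\|x_\alpha\|$ is exactly what lets operator-norm convergence $T_\gamma\to T$ control $\|(T-T_{\gamma_\varepsilon})x_\alpha\|$ simultaneously over all indices $\alpha$; mere strong (pointwise) convergence of the $T_\gamma$ would not suffice. Also, in contrast with Lemma \ref{L1}, no passage to a tail of the net is required, since norm-boundedness of the net is built into the definition of a quasi $KB$ operator.
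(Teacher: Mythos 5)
Your proof is correct and is essentially the paper's own argument: the same $\varepsilon/3$ perturbation of the norm-Cauchy property of $(T_{\gamma_\varepsilon}x_\alpha)_\alpha$ using operator-norm convergence and the uniform bound on $\|x_\alpha\|$ (the paper simply normalizes $\|x_\alpha\|\le 1$ and ends with $3\varepsilon$). No differences of substance.
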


\begin{proof}
Let $(x_\alpha)_\alpha$ be an increasing norm bounded net in $E$. 
WLOG, $\|x_\alpha\|\le 1$ for all $\alpha$. 
Take $\varepsilon>0$ and choose $\gamma_0$ satisfying  
$\|T_{\gamma_0}-T\|\le\varepsilon$.
Choose $\alpha_0$ such that 
$\|T_{\gamma_0}x_{\alpha^{'}}-T_{\gamma_0}x_{\alpha^{''}}\|\le\varepsilon$
for $\alpha^{'}, \alpha^{''}\ge\alpha_0$. Then 
$$
   \|Tx_{\alpha^{'}} - Tx_{\alpha^{''}} \| \le 
$$
$$
   \|Tx_{\alpha^{'}} - T_{\gamma_0} x_{\alpha^{'}} \|+ 
   \|T_{\gamma_0} x_{\alpha^{'}} - T_{\gamma_0} x_{\alpha^{''}}\|+ 
   \|T_{\gamma_0} x_{\alpha^{''}} - Tx_{\alpha^{''}} \| \le
$$
$$ 
   \|T_{\gamma_0} - T\| \|x_{\alpha^{'}}\| +  
   \|T_{\gamma_0} x_{\alpha^{'}} - T_{\gamma_0} x_{\alpha^{''}} \|+ 
   \|T_{\gamma_0} - T\| \|x_{\alpha^{''}}\|    \le 3\varepsilon
$$
for all $\alpha^{'}, \alpha^{''}\ge\alpha_0$, and hence $T$ is quasi $KB$. 
\end{proof}
\noindent
By \cite[Proposition 1.2]{AlEG}, every quasi $\sigma$-$KB$ operator in $E$
is quasi $KB$. The following theorem is a quasi KB version 
of Theorem \ref{th1} (cf. also \cite[Corollary 2.8]{BA}).

\begin{theorem}\label{th2}%%%OK
$\text{\rm r-L}_{KB}(E)$ 
$($resp., $\text{\rm r-L}_{qKB}(E)$$)$ 
is a left $($resp., two sides$)$ algebraic ideal of  
$\text{\rm L}_r(E)$, and hence is a subalgebra of $\text{\rm L}_r(E)$. 
Furthermore$:$ 
\begin{enumerate}[$i)$]
\item \
The algebra $\text{\rm r-L}_{KB}(E)$ is unital iff 
$\text{\rm r-L}_{qKB}(E)$ is unital iff $E$ is a KB-space.
\item \
If $E$ is Dedekind complete then
$(\text{\rm r-L}_{qKB}(E), \ \|\cdot\|_{\text{\rm r-P}})$
is a Banach lattice algebra for $\text{\rm P}=\text{\rm L}_{qKB}(E)$.
\end{enumerate}
\end{theorem}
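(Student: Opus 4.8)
The plan is to follow the proof of Theorem~\ref{th1} step by step, with Lemma~\ref{L2} replacing Lemma~\ref{L1} and with a domination property for quasi-$KB$ operators replacing the known one for Lebesgue operators. First the algebraic part. I would verify that $\text{\rm r-L}_{KB}(E)$ and $\text{\rm r-L}_{qKB}(E)$ are linear subspaces of $\text{\rm L}_r(E)$; for $\text{\rm r-L}_{qKB}(E)$ this is immediate because a sum of norm-Cauchy nets is norm-Cauchy, so $T+S=(T_1+S_1)-(T_2+S_2)$ displays $T+S$ again as a difference of positive quasi-$KB$ operators, and the $\text{\rm r-L}_{KB}(E)$ case is analogous (here one also notes that a sum of two positive $KB$ operators is $KB$). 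For the ideal property one reduces to positive operators: if $0\le S\in\text{\rm L}_r(E)$, $0\le T\in\text{\rm L}_{KB}(E)$ and $(x_\alpha)_\alpha$ is a norm-bounded increasing net in $E_+$, then choosing $x$ with $\|Tx_\alpha-Tx\|\to0$ gives $\|STx_\alpha-STx\|\to0$ by continuity of $S$, so $ST$ is $KB$; thus $\text{\rm r-L}_{KB}(E)$ is a left ideal of $\text{\rm L}_r(E)$ (and of $\text{\rm L}(E)$). For $\text{\rm r-L}_{qKB}(E)$ there is in addition a right-ideal property, since for $S\ge0$ the net $(Sx_\alpha)_\alpha$ is again increasing, norm-bounded and in $E_+$, so $(TSx_\alpha)_\alpha$ is norm-Cauchy when $T$ is quasi-$KB$. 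A left (resp.\ two-sided) ideal is a subalgebra.

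For $i)$, the easy implication is that if $E$ is a $KB$-space then $I$ is a $KB$ operator, so $I\in\text{\rm L}_{KB}(E)\subseteq\text{\rm r-L}_{KB}(E)\subseteq\text{\rm r-L}_{qKB}(E)$, and an ideal of $\text{\rm L}_r(E)$ containing $I$ is all of $\text{\rm L}_r(E)$, which is unital. Conversely, suppose $\text{\rm r-L}_{qKB}(E)$ (the case of $\text{\rm r-L}_{KB}(E)$ being identical) has a unit $U$. For $0\le x\in E$ with $x\ne0$, pick $0\le x^*\in E^*$ with $x^*(x)=1$: the rank-one operator $x^*\otimes x\colon y\mapsto x^*(y)x$ is positive and $KB$, because it maps any increasing norm-bounded net in $E_+$ to an increasing norm-bounded net in the cone $\mathbb{R}_+x$, which converges to an element of its range $\mathbb{R}x$; hence $x^*\otimes x\in\text{\rm r-L}_{qKB}(E)$, and $U\,(x^*\otimes x)=x^*\otimes x$ evaluated at $x$ forces $Ux=x$. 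So $U=I$, i.e.\ $I=I_1-I_2$ with $I_1,I_2\ge0$ quasi-$KB$; then every norm-bounded increasing net $(x_\alpha)_\alpha$ in $E_+$ satisfies $x_\alpha=I_1x_\alpha-I_2x_\alpha$, a difference of norm-Cauchy nets, hence is norm-Cauchy and, $E$ being complete, norm-convergent, so $E$ is a $KB$-space.

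For $ii)$, put $\text{\rm P}=\text{\rm L}_{qKB}(E)$. Then $\text{\rm P}$ is a linear subspace of $\text{\rm L}(E)$, closed in the operator norm by Lemma~\ref{L2}, so Lemma~\ref{P-norm} makes $(\text{\rm r-L}_{qKB}(E),\|\cdot\|_{\text{\rm r-P}})$ a Banach space. The new ingredient is the domination property: if $0\le R\le Q$ with $Q$ quasi-$KB$ and $(x_\alpha)_{\alpha\in A}$ is a norm-bounded increasing net in $E_+$, then given $\varepsilon>0$ pick $\alpha_0$ with $\|Qx_\alpha-Qx_\beta\|<\varepsilon$ for $\alpha,\beta\ge\alpha_0$; for such $\alpha,\beta$ choose $\gamma\ge\alpha,\beta$ in $A$ and use $0\le R(x_\gamma-x_\alpha)\le Q(x_\gamma-x_\alpha)$ with monotonicity of the norm on $E_+$ to get $\|Rx_\alpha-Rx_\gamma\|\le\|Qx_\alpha-Qx_\gamma\|<\varepsilon$, and likewise for $\beta$, so $\|Rx_\alpha-Rx_\beta\|<2\varepsilon$; thus $R$ is quasi-$KB$. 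Since $E$ is Dedekind complete, for $T=T_1-T_2\in\text{\rm r-L}_{qKB}(E)$ the modulus $|T|$ exists in $\text{\rm L}_r(E)$ and $0\le|T|\le T_1+T_2\in\text{\rm L}_{qKB}(E)$, so by domination $|T|\in\text{\rm L}_{qKB}(E)\subseteq\text{\rm r-L}_{qKB}(E)$; hence $\text{\rm r-L}_{qKB}(E)$ is a sublattice of the Banach lattice $\text{\rm L}_r(E)$. From here the argument of Theorem~\ref{th1}$(ii)$ carries over: $\|T\|_{\text{\rm r-P}}=\|\,|T|\,\|=\|T\|_r$ using that $0\le A\le B$ in $\text{\rm L}_r(E)$ implies $\|A\|\le\|B\|$ and using formula~(1); $|S|\le|T|$ then gives $\|S\|_{\text{\rm r-P}}=\|\,|S|\,\|\le\|\,|T|\,\|=\|T\|_{\text{\rm r-P}}$, so the norm is a lattice norm; and $\pm ST\le|S|\,|T|\in\text{\rm L}_{qKB}(E)$ — the membership holding because $|T|\ge0$ sends norm-bounded increasing nets in $E_+$ to such nets and $|S|$ then sends those to norm-Cauchy nets — so $\|ST\|_{\text{\rm r-P}}\le\|\,|S|\,|T|\,\|\le\|\,|S|\,\|\,\|\,|T|\,\|=\|S\|_{\text{\rm r-P}}\|T\|_{\text{\rm r-P}}$. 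This yields the Banach lattice algebra structure. The step I expect to be most delicate is the domination property for quasi-$KB$ operators stated with nets — everything in $ii)$ rests on the sublattice property it yields — handled by the directedness trick above; a secondary point to be careful about in the first part is the additive closure of $\text{\rm r-L}_{KB}(E)$, i.e.\ that a sum of two positive $KB$ operators is $KB$.
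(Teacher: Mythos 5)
Your proof is correct and follows the paper's own route: reduction to positive operators for the left (resp.\ two-sided) ideal property, Lemma~\ref{P-norm} together with Lemma~\ref{L2} for completeness of $\|\cdot\|_{\text{\rm r-P}}$, and then domination $\Rightarrow$ modulus $\Rightarrow$ lattice norm $\Rightarrow$ submultiplicativity exactly as in Theorem~\ref{th1}$(ii)$. Where you differ is that you supply two arguments the paper outsources. In $i)$ the paper simply declares the equivalences obvious, each condition being equivalent to the identity operator being KB; you actually prove the nontrivial half, showing via the positive rank-one KB operators $x^*\otimes x$ that any unit of the ideal must equal $I$, and then that $I=I_1-I_2$ with $I_1,I_2\ge 0$ quasi-$KB$ forces every norm-bounded increasing net in $E_+$ to be norm Cauchy, hence convergent. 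In $ii)$ the paper invokes the domination property by citing \cite[Theorem 2.6]{AlEG} (stated there for KB operators, whereas what is needed here is the quasi-$KB$ version); your directedness argument with $0\le R(x_\gamma-x_\alpha)\le Q(x_\gamma-x_\alpha)$ for $\gamma\ge\alpha,\beta$ proves precisely the quasi-$KB$ domination that part $ii)$ uses, making the proof self-contained. One caveat: like the paper, which calls it straightforward and omits it, you do not verify that the sum of two positive KB operators is KB, on which the subspace property of $\text{\rm r-L}_{KB}(E)$ rests; this is subtler than it looks, since KB-ness requires the limit $\lim_\alpha (T+S)x_\alpha=Tx+Sy$ to be attained as $(T+S)z$ for a single $z$, while the attaining elements $x$ and $y$ for $T$ and $S$ separately are unrelated. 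You rightly flag it, and you match the paper's level of detail there, but a fully airtight write-up would include an argument for that step.
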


\begin{proof}
We omit the straightforward proof that  
$\text{\rm r-L}_{KB}(E)$ and 
$\text{\rm r-L}_{qKB}(E)$
are both vector subspaces of $\text{\rm L}_r(E)$.
We need to show that  $\text{\rm r-L}_{KB}(E)$
(resp., $\text{\rm r-L}_{qKB}(E)$) is closed under left (two sides) 
multiplication by elements of $\text{\rm L}_r(E)$. 

A) 
Firstly consider $\text{\rm r-L}_{KB}(E)$.
Let $T\in\text{\rm r-L}_{KB}(E)$ and $S\in\text{\rm L}_r(E)$. 
WLOG, suppose $T\ge 0$ and $S\ge 0$. If $(x_\alpha)_\alpha$
be an increasing norm bounded net in $E_+$, then $\|Tx_\alpha-Tx\| \to 0$
for some $x\in E$ and, since $S$ is continuous, we obtain 
$\|STx_\alpha-STx\|\to 0$. Then $ST\in\text{\rm r-L}_{KB}(E)$, 
showing that $\text{\rm r-L}_{KB}(E)$ is a left ideal of $\text{\rm L}_r(E)$.

B) 
Now, let $T\in\text{\rm r-L}_{qKB}(E)$ and $S\in\text{\rm L}_r(E)$,
and let $(x_\alpha)_\alpha$ be a norm bounded increasing net in $E_+$. 
Then the net $(Tx_\alpha)_\alpha$ is $\|.\|$-Cauchy and, since $S$ is continuous, 
the net $(STx_\alpha)_\alpha$ is also $\|.\|$-Cauchy, showing 
$ST\in\text{\rm r-L}_{qKB}(E)$. 

Since the net $Sx_\alpha$ is increasing and norm bounded in $E_+$, 
we obtain that the net $(TSx_\alpha)_\alpha$ is $\|.\|$-Cauchy, showing that 
$TS\in\text{\rm r-L}_{qKB}(E)$.

Thus, $\text{\rm r-L}_{qKB}(E)$
is a two sides algebraic ideal of $\text{\rm L}_r(E)$. 

\vspace{3mm}
$i)$
It is obvious since the three conditions are equivalent to
the fact that the identity operator in $E$ is KB.
 
\vspace{3mm}
$ii)$ \
Upon the fact that the KB operators satisfy 
the domination property \cite[Theorem 2.6]{AlEG},
the proof is similar to the proof of $ii)$ of Theorem~\ref{th1}
via replacement of Lemma~\ref{L1} by Lemma~\ref{L2}.
\end{proof}

%%%%%%%%%%%%%%%%%%%%
\section{Relations between KB and compact operators}
%%%%%%%%%%%%%%%%%%%%

Here we discuss relations between KB and compact operators in $E$.

\subsection{}
We begin with the following fact.

\begin{prop}\label{prop1}%%%OK
Every continuous finite rank operator 
on a  Banach lattice $E$ is a regularly $KB$ operator. 
\end{prop}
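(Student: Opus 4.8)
The plan is to show first that every continuous finite rank operator on $E$ is in fact a $KB$-operator, and then to split the given operator into a difference of two \emph{positive} finite rank operators.

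First I would fix a continuous finite rank operator $T$ together with a representation $Tx=\sum_{i=1}^{n}\phi_i(x)\,u_i$, where $\phi_i\in E^{*}$ and $u_i\in E$. Using that $E^{*}$ is a Banach lattice, I would replace each $\phi_i$ by $\phi_i^{+}-\phi_i^{-}$ and each $u_i$ by $u_i^{+}-u_i^{-}$ and regroup so that $T=T_1-T_2$, where
$$
  T_1x=\sum_{i=1}^{n}\bigl(\phi_i^{+}(x)\,u_i^{+}+\phi_i^{-}(x)\,u_i^{-}\bigr),\qquad
  T_2x=\sum_{i=1}^{n}\bigl(\phi_i^{+}(x)\,u_i^{-}+\phi_i^{-}(x)\,u_i^{+}\bigr).
$$
Each summand of $T_1$ and of $T_2$ maps $E_+$ into $E_+$, so $T_1,T_2$ are positive; they are clearly continuous and of finite rank. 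Hence it suffices to prove that a positive continuous finite rank operator $A$ is $KB$, and for this I may fix a representation $Ax=\sum_{k=1}^{m}\chi_k(x)\,v_k$ with all $\chi_k\in E^{*}_{+}$ (obtained exactly as above from any representation of $A$) and $v_k\in E$.

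The key step is then short. Given a norm-bounded increasing net $(x_\alpha)_\alpha$ in $E_+$ with $\|x_\alpha\|\le M$ for all $\alpha$, each real net $(\chi_k(x_\alpha))_\alpha$ is increasing (since $\chi_k\ge 0$) and bounded by $M\|\chi_k\|$, hence converges to some $c_k\in\mathbb{R}$; consequently $Ax_\alpha\to w:=\sum_{k=1}^{m}c_k v_k$ in norm. As $A$ has finite rank, its range $A(E)$ is a finite-dimensional, hence closed, subspace of $E$, so $w\in A(E)$ and $w=Az$ for some $z\in E$; then $\|Ax_\alpha-Az\|\to 0$, i.e.\ $A\in\text{\rm L}_{KB}(E)$. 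Applying this to $T_1$ and $T_2$ gives $T=T_1-T_2\in\text{\rm r-L}_{KB}(E)$.

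I do not expect a genuine obstacle here; the only point needing a moment's attention is why the norm-limit $w$ belongs to the range of $A$, which is precisely where finite rank enters, through the closedness of finite-dimensional subspaces. (One could instead keep general functionals and invoke normality of a pointed closed cone in a finite-dimensional space to obtain convergence of the nets $(\chi_k(x_\alpha))_\alpha$, but passing to positive functionals makes this unnecessary.)
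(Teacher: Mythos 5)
Your proof is correct and follows essentially the same route as the paper: both use the lattice structure of $E^{*}$ to split into positive pieces and then exploit that the image of an increasing norm-bounded net converges, with limit lying in the finite-dimensional (hence closed) range, so it is of the form $Az$. The only real difference is organizational: the paper first reduces to a positive rank-one operator $x_1\otimes f_1$, whereas you handle the whole finite-rank operator at once via closedness of its range, which in fact yields the slightly sharper observation that every continuous finite rank operator is itself $KB$, not merely regularly $KB$.
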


\begin{proof}
Let $T\in\text{\rm L}(E)$ and $\dim(TE)<\infty$. Then 
$$
   T = \sum_{k=1}^n x_k \otimes f_k 
   \  \text{for} \   x_1, \dots, x_n \in E
   \ \text{and} \   f_1, \dots, f_n \in E^\ast.
$$
WLOG, we can assume $T=x_1\otimes f_1$.
Since the dual $E^\ast$ is Dedekind complete, 
$f_1$ is regular, and hence $T$ is also regular. 
WLOG, suppose  $x_1\ge 0$ 
and $f_1 \ge 0$. Then $T$ is positive, and hence regular.
Let $(x_\alpha)_\alpha$ be an increasing
norm bounded net in $E_+$. Since 
$Tx_\alpha=(x_1 \otimes f_1) (x_\alpha)=f_1 (x_\alpha)x_1\uparrow$ 
and norm bounded, then $Tx_\alpha\to y\in\text{\rm span}(x_1)$ 
and $Tx_\alpha\convn Tx$ for some $x\in E$. 
Hence $T$ is \text{\rm r-KB}.
\end{proof}

It is worth to notice that, for a positive compact operator 
$T$ in the Banach lattice $c$ of real convergent sequences, 
and, for a norm bounded increasing net $(x_\alpha)_\alpha$ in $c_+$, a vector $x\in c$,
satisfying $\|Tx_\alpha-Tx\| \to 0$, need not to be an upper bound 
of $(x_\alpha)_\alpha$. Indeed, let $f$ be a positive functional on $c$ that 
assigns to any element of $c$ its limit. 
Define $T:c\to c$ by $Tx:=f(x)\cdot\mathbb{1}$, where $\mathbb{1}$ 
is the real sequence identically equal to $1$.
Then $T$ is a $KB$ operator by Proposition~\ref{prop1}. Take 
$x_n:=(\underbrace{1,1,...,1}_n,0,0,0,...)$ in $c_+$.
Then $x_n\uparrow$ and $\|x_n\|=1$ for all $n\in\mathbb{N}$.
Yet, for every $x\in c$ satisfying $x_n\uparrow \le x$, we have $\|Tx\|\ge 1$ 
and hence $\|Tx_n-Tx\|=\|Tx\| \ge 1$ for each $n$.

\begin{prop}\label{prop2}%%%OK
Every r-compact operator $T$ in a Banach lattice $E$ is regularly quasi $KB$.
\end{prop}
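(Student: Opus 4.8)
The plan is to reduce to the case of a single positive compact operator and then exploit the interplay between norm-boundedness, monotonicity, and compactness. Since $T$ is r-compact, write $T = T_1 - T_2$ with $T_1, T_2$ positive compact operators; because $\text{\rm r-L}_{qKB}(E)$ is a vector subspace of $\text{\rm L}_r(E)$ (Theorem~\ref{th2}), it suffices to show that each positive compact operator is quasi $KB$. So assume $T \ge 0$ is compact, and let $(x_\alpha)_\alpha$ be a norm-bounded increasing net in $E_+$, say $\|x_\alpha\| \le M$ for all $\alpha$. I want to show $(Tx_\alpha)_\alpha$ is norm-Cauchy.

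First I would use compactness to obtain relative compactness of the range: the set $\{x_\alpha : \alpha\}$ lies in the ball of radius $M$, so $\{Tx_\alpha : \alpha\}$ is relatively norm-compact, hence its norm-closure $K$ is compact. The key step is then to argue that an increasing net inside a norm-compact set must be norm-Cauchy. Indeed, by relative compactness the net $(Tx_\alpha)_\alpha$ has a subnet converging in norm to some $y \in E$; since $T \ge 0$ and $(x_\alpha)$ is increasing, the net $(Tx_\alpha)$ is increasing in $E_+$, and an increasing net with a norm-convergent subnet converges in norm to its subnet limit (for any $\varepsilon>0$, pick $\alpha_0$ with $\|Tx_{\alpha_0} - y\| < \varepsilon$ from the subnet; for $\alpha \ge \alpha_0$ one has $0 \le Tx_\alpha - Tx_{\alpha_0}$, and combined with the fact that $y$ is the supremum direction... ). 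Here I should be careful: monotone plus subnet-convergent does not in general force convergence of the whole net in an arbitrary Banach lattice, because order intervals need not be norm-complete and the subnet limit need not dominate all later terms. The cleanest route is to observe that $(Tx_\alpha)$ is increasing and norm-relatively-compact, so for every $\varepsilon$ the net is eventually within $\varepsilon$ of the compact set's finitely many $\varepsilon$-net points; combined with monotonicity one shows the net is norm-Cauchy directly without needing a subnet limit that is an upper bound.

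The main obstacle I anticipate is precisely this monotone-net-in-a-compact-set lemma: making rigorous that an increasing net whose range is totally bounded is norm-Cauchy. The standard trick is: suppose not; then there exist $\varepsilon>0$ and a cofinal collection of pairs $\alpha' < \alpha''$ with $\|Tx_{\alpha''} - Tx_{\alpha'}\| \ge \varepsilon$. Using monotonicity, the increments $Tx_{\alpha''} - Tx_{\alpha'} \ge 0$ and one can extract from cofinal indices an increasing sequence $\alpha_1 < \alpha_2 < \cdots$ with $\|Tx_{\alpha_{k+1}} - Tx_{\alpha_k}\| \ge \varepsilon$ for all $k$; then the sequence $(Tx_{\alpha_k})_k$ is increasing with consecutive terms $\varepsilon$-separated, so it has no norm-convergent subsequence, contradicting relative compactness of $\{Tx_\alpha\}$. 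This argument uses only that $[a,b]$-type positivity of increments keeps terms "moving forward" so no repeats can accumulate; it does not require Dedekind completeness.

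Finally I would remark that this is where r-compact is genuinely used (via total boundedness of the range), and note that the obtained $x$ need not be an upper bound of $(x_\alpha)$, consistent with the $c$-example preceding the proposition; only the norm-Cauchy (quasi $KB$) conclusion is available in general, not the full $KB$ property.
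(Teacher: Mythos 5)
Your proposal is correct and takes essentially the paper's route: reduce to a single positive compact operator, use compactness of the image of the norm-bounded increasing net, and use monotonicity to turn the resulting total-boundedness/convergent-subnet information into norm-Cauchyness of the whole net $(Tx_\alpha)_\alpha$. The only divergence is your (unnecessary) worry about the subnet argument: since a subnet is eventually above any fixed index and the positive cone is norm-closed, the subnet limit $y$ satisfies $y\ge Tx_\alpha$ for every $\alpha$, so $0\le y-Tx_\alpha\le y-Tx_{\alpha_\beta}$ and the lattice norm give convergence of the entire net — this is exactly the step the paper uses implicitly, while your totally-bounded-plus-increasing contradiction argument is a valid, slightly longer substitute (just note that the non-Cauchy witnesses $\alpha',\alpha''$ need not be comparable, so one passes to comparable pairs via an upper bound at the cost of a factor $2$ in $\varepsilon$).
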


\begin{proof}
WLOG, we restrict ourselves to the case when $T$ 
is a positive compact operator.
Let a net $(x_\alpha)_\alpha$ in $E$ be norm bounded
and $0\le x_\alpha\uparrow$. 
Since $T$ is compact, the increasing net $(Tx_\alpha)_\alpha$ has a subnet 
$(Tx_{\alpha_\beta})_\beta$ with $\left\|Tx_{\alpha_\beta}-y\right\|\to 0$ 
for some $y \in E$. In particular, $(Tx_\alpha)_\alpha$ has a $\|.\|$-Cauchy subnet, 
and hence $(Tx_\alpha)_\alpha$ is also $\|.\|$-Cauchy, showing that $T$ is quasi $KB$, 
and since $T\ge 0$, $T$ is \text{\rm r-qKB}.
\end{proof}

\subsection{}
The next example shows that the inclusion 
$$
   \text{\rm r-L}_{KB}(E)\subseteq\text{\rm r-L}_{qKB}(E)
$$
can be proper.

\begin{exam}\label{Example1 c_0}%%%OK
{\em
Denote elements of $c_0$ by $x =\sum_{n=1}^\infty a_n \cdot e_n$,
where $e_n$ are orts placed at $n$ and $(a_n)_n$ is a real sequence converging to zero.
Define an operator $T:c_0\to c_0$ by
$$
  T\left( \sum_{n=1}^\infty a_n e_n\right)  = \sum_{n=1}^\infty \frac{a_n}{n} e_n. 
$$
Then $T$ is positive and compact, and hence a quasi $KB$ operator
by Proposition \ref{prop2}. Thus $T\in{\text{\rm r-}}{\text{\cal L}}_{qKB}(E)$.
Take an increasing norm bounded sequence $(x_n)_n$ in $c_0$ by
$x_n = \sum\limits_{k=1}^n e_k$. The sequence 
$(Tx_n)_n =\bigl(\sum\limits_{k=1}^n\frac{1}{k} e_k\bigl)_n$ is norm-Cauchy in $c_0$, 
however there is no element $\sum\limits_{k=1}^\infty a_k e_k$ in $c_0$ satisfying
$$
   T\left(\sum\limits_{k=1}^\infty a_k e_k \right)=
   \sum\limits_{k=1}^\infty \frac{1}{k}e_k=
   \|\cdot\|_{c_0}\text{-}\lim\limits_{n\to\infty}Tx_n.
$$
Therefore $T$ is not a $KB$-operator.}
\end{exam}

\subsection{}
Our next result develops the idea of Example~\ref{Example1 c_0}.

\begin{theorem}\label{prop3}%%%OK
In a Banach lattice $E$ with order continuous norm the following are equivalent$:$ 
\begin{enumerate}[$1)$]
\item \ $E$ is a $KB$-space$;$
\item \ every positive compact operator in $E$ is $KB$.
\end{enumerate}
\end{theorem}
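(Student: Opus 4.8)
The plan is to establish the two implications separately. The implication $1)\Rightarrow 2)$ is immediate: if $E$ is a $KB$-space, then (by the definition of $KB$-space, equivalently by Theorem~\ref{th2}~$i)$) every norm-bounded increasing net $(x_\alpha)_\alpha$ in $E_+$ converges in norm to some $x\in E$, so $\|Tx_\alpha-Tx\|\to 0$ for every $T\in\text{\rm L}(E)$; hence every operator on $E$, and in particular every positive compact one, is a $KB$ operator. (If one prefers the sequential formulation of ``$KB$-space'', the net version follows by extracting, from a non-norm-Cauchy norm-bounded increasing net in $E_+$, a non-norm-Cauchy increasing subsequence, using directedness and monotonicity of the norm on $E_+$.)

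For $2)\Rightarrow 1)$ I argue by contraposition: assuming $E$ has order continuous norm but is not a $KB$-space, I construct a positive compact operator that is not $KB$. Since the norm of $E$ is order continuous, $E$ is Dedekind complete, and since $E$ is not a $KB$-space it contains a sublattice lattice-isomorphic to $c_0$; concretely, fix a disjoint sequence $(d_n)_n$ in $E_+$ with $\|d_n\|=1$ for all $n$ and $M_0:=\sup_n\bigl\|\sum_{k=1}^n d_k\bigr\|<\infty$. Let $B_n$ be the band generated by $d_n$ and $P_n$ the band projection onto $B_n$. Disjoint positive vectors generate disjoint bands, so $P_nP_m=0$ for $n\neq m$, each $Q_n:=P_1+\dots+P_n$ is the band projection onto $B_1\oplus\dots\oplus B_n$, $0\le Q_n\le I$, and $P_kd_m=\delta_{km}d_k$. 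In the Banach lattice $B_n$ pick (Hahn--Banach, then pass to the modulus in the Dedekind complete dual of $B_n$) a positive $g_n\in (B_n)^*$ with $\|g_n\|=1$ and $g_n(d_n)=1$, and set $f_n:=g_n\circ P_n\in E^*_+$, so that $\|f_n\|\le 1$ and $f_n(d_m)=\delta_{nm}$.

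Define $T\colon E\to E$ by $Tx=\sum_{n=1}^\infty\tfrac1n f_n(x)\,d_n$. Using that the $d_n$ are disjoint together with $\sum_{n=M+1}^N\tfrac1n|f_n(x)|d_n\le\tfrac{\|x\|}{M+1}\sum_{n=M+1}^N d_n$ and $\bigl\|\sum_{n=M+1}^N d_n\bigr\|\le 2M_0$, one checks that the series converges for every $x$, that $T$ is positive and bounded, and that $T$ is the operator-norm limit of the finite rank operators $\sum_{n\le N}\tfrac1n(d_n\otimes f_n)$, hence compact. By hypothesis $2)$, $T$ is a $KB$ operator. Now feed $T$ the norm-bounded increasing sequence $x_m:=\sum_{k=1}^m d_k\in E_+$ ($\|x_m\|\le M_0$). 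Since $f_k(d_j)=\delta_{kj}$ we get $Tx_m=\sum_{k=1}^m\tfrac1k d_k$, which is norm-Cauchy, so $Tx_m\to y:=\sum_{k=1}^\infty\tfrac1k d_k$ in norm. As $T$ is $KB$, there is $x\in E$ with $Tx=y$; applying the norm-continuous projection $P_k$ and using $P_kd_j=\delta_{kj}d_k$ gives $\tfrac1k f_k(x)\,d_k=\tfrac1k d_k$, hence $f_k(x)=1$ for every $k$. Then $f_k(|x|)\ge|f_k(x)|=1$, so $\|P_k|x|\,\|\ge g_k(P_k|x|)=f_k(|x|)\ge 1$ for every $k$. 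But $z_n:=Q_n|x|=\sum_{k=1}^n P_k|x|$ is increasing and $\le|x|$ (as $Q_n\le I$), so by Dedekind completeness $z_n\uparrow z_\infty:=\sup_n z_n$, whence $z_\infty-z_n\downarrow 0$ and, by order continuity of the norm, $\|z_\infty-z_n\|\to 0$; in particular $\|P_{n+1}|x|\,\|=\|z_{n+1}-z_n\|\to 0$, contradicting $\|P_{n+1}|x|\,\|\ge 1$. Hence no such $x$ exists, $T$ is not $KB$, and the contrapositive is proved.

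The main obstacle is the heart of $2)\Rightarrow 1)$: passing from the soft fact that a non-$KB$ $E$ (with order continuous norm) contains a copy of $c_0$ to an honest positive compact operator whose candidate limit $y=\sum\frac1k d_k$ provably has no preimage in $E$. The construction $T=\sum\frac1n(d_n\otimes f_n)$ is tailored so that $Tx=y$ forces $f_k(x)=1$ for all $k$ simultaneously, and the decisive point is that order continuity of the norm, exploited through the band projections $P_k$, forbids this, since the order-bounded pieces $P_k|x|$ inside $E$ cannot all have norm $\ge 1$. The remaining ingredients (the norm estimates making $T$ compact, Dedekind completeness of $E$, and that disjoint positive vectors generate disjoint bands so the $P_k$ behave as described) are routine.
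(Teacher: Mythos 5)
Your proof is correct and follows essentially the same route as the paper: assuming $E$ is not a $KB$-space, you take a disjoint normalized positive sequence spanning a copy of $c_0$, build the same diagonal operator $T=\sum_n\frac1n\,d_n\otimes(g_n\circ P_n)$ as the paper's formula (7), feed it the partial sums $\sum_{k\le n}d_k$, and derive the contradiction with order continuity through the band projections $P_k$. The only (harmless) differences are technical: you verify compactness by exhibiting $T$ as an operator-norm limit of finite-rank operators via the tail estimate, where the paper instead uses that $T(B_E)$ sits in the image of a compact order interval of $c_0$, and your final step with $Q_n|x|\uparrow$ is a slightly cleaner version of the paper's argument with $w_n=\sum_{k\le n}P_k(y)$.
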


\begin{proof}
The implication $1) \Rightarrow 2)$  is obvious, as any continuous operator 
on a $KB$-space is a $KB$-operator.

$2) \Rightarrow 1)$ \ 
Assume, in contrary, that $E$ be not a $KB$-space. 
Then there is a homeomorphic lattice embedding 
$c_0 \stackrel{i}{\longrightarrow}E$, say 
$i((a_k)_{k=1}^\infty)=\sum\limits_{k=1}^\infty a_k \cdot e_k$, 
where $(a_k)_{k=1}^\infty \in c_0$. WLOG, assume that  
the sequence $(e_k)_k$ in $E_+$ is pairwise disjoint and normalized. 
Let $f_k$ be norm-one functionals on $E$ such that $f_k(e_k)=1$ for all $k\in\mathbb{N}$.
Then $|f_k|$ are norm-one positive functionals on $E$ with 
$|f_k|(e_k)=1$ for all $k\in\mathbb{N}$.
Indeed $\||f_k|\|=\|f_k\|=1$ and 
$$
   1=|f_k(e_k)|\le|f_k|(e_k)=\sup\{|f_k(x)|: |x|\le e_k\}\le
$$
$$
   \sup\{|f_k(x)|:\|x\|\le 1\}=\|f_k\|=1
$$
for all $k\in\mathbb{N}$. Let $P_k$ be the band projection onto the principle 
band $B_{e_k}$ generated by $e_k$. Define a positive operator $T:E\to E$ by
$$
   Tx=\sum_{k=1}^\infty\frac{|f_k|(P_k(x))}{k}e_k . 
   \eqno(7)
$$
Since $||f_k|(P_k(x))|\le\||f_k|\|\cdot\|P_k(x)\|\le\|x\|$ for all $k\in\mathbb{N}$,
the formula (7) implies
$$
   T(B_E)\subseteq i([-u,u]),  
   \eqno(8)
$$
where $u:=\bigl(\frac{1}{k}\bigl)_k\in c_0$.
As order intervals of $c_0$ are compact,
the set $i([-u,u])$ is compact in $E$.
It follows from (8) that $T$ is a compact operator. 
By the assumption $2)$, $T$ is a $KB$-operator. 
Consider an increasing norm bounded sequence $y_n=\sum\limits_{k=1}^n e_k$ in $E$.
Then there is some $y\in E$ such that $\|Ty_n-Ty\|\to 0$. Since 
$$
   Ty_n=\sum_{k=1}^n\frac{|f_k|(P_k(y_n))}{k}e_k=
   \sum_{k=1}^n\frac{|f_k|(e_k)}{k}e_k=
   \sum_{k=1}^n\frac{1}{k}e_k
$$
for all $n\in\mathbb{N}$, $Ty=\sum_{k=1}^\infty\frac{1}{k}e_k$. 
Let $w_n=\sum_{k=1}^n P_k(y)$. Then $w_n\convo y$.
Since the norm in $E$ is order continuous, $\|w_n-y\|\to 0$.
Then the sequence $(w_n)_n$ is norm-Cauchy.
It follows from $\|Ty_n-Ty\|\to 0$, $Ty_n=\sum_{k=1}^n\frac{1}{k}e_k$, and
$Ty=\sum_{k=1}^\infty\frac{1}{k}e_k$ that $|f_n|(P_n(y_n))=|f_n|(P_n(y))$.
Then
$$
   1=|f_n|(e_n)=|f_n|(P_n(y_n))=|f_n|(P_n(y))\le\|P_n(y)\|,
$$
violating $\|P_n(y)\|=\|w_{n}-w_{n-1}\|\to 0$.
The obtained contradiction completes the proof.
\end{proof}

\begin{cor}\label{cor1}%%%OK
Let $E$ be a Banach lattice with order continuous norm.
The following conditions are equivalent. 
\begin{enumerate}[$1)$]
\item \ 
$E$ is a $KB$-space.
\item \
$\text{\rm L}(E)=\text{\rm L}_{KB}(E)$.
\item \ 
$\text{\rm L}_{qKB}(E)=\text{\rm L}_{KB}(E)$.
\item \ 
$\text{\rm r-L}_{qKB}(E)=\text{\rm r-L}_{KB}(E)$.
\item \ 
$(\text{\rm L}_{qKB})_+(E)=(\text{\rm L}_{KB})_+(E)$.
\item \ 
Every positive compact operator in $E$ is $KB$.
\end{enumerate}
\end{cor}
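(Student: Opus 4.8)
The plan is to prove the equivalences by a single cyclic chain $1)\Rightarrow 2)\Rightarrow 3)\Rightarrow 4)\Rightarrow 5)\Rightarrow 6)\Rightarrow 1)$. Most of these links are purely formal, resting on the always-valid inclusions $\text{\rm L}_{KB}(E)\subseteq\text{\rm L}_{qKB}(E)\subseteq\text{\rm L}(E)$ and $\text{\rm r-L}_{KB}(E)\subseteq\text{\rm r-L}_{qKB}(E)$ (a $KB$ operator takes a norm-bounded increasing net of $E_+$ to a norm-convergent, hence norm-Cauchy, net, and positivity is preserved under taking differences). Only two steps carry genuine content: $4)\Rightarrow 5)$, which uses the domination property for $KB$ operators \cite[Theorem 2.6]{AlEG}, and $6)\Rightarrow 1)$, which is exactly the nontrivial implication of Theorem~\ref{prop3} and is the single point where order continuity of the norm of $E$ is invoked.

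For $1)\Rightarrow 2)$: since $E$ is a $KB$-space, the identity operator $I$ lies in $\text{\rm L}_{KB}(E)$, and $\text{\rm L}_{KB}(E)$ is a left ideal of $\text{\rm L}(E)$, because if $T\in\text{\rm L}_{KB}(E)$, $S\in\text{\rm L}(E)$, and $(x_\alpha)_\alpha$ is a norm-bounded increasing net in $E_+$ with $\|Tx_\alpha-Tx\|\to 0$, then continuity of $S$ gives $\|STx_\alpha-STx\|\to 0$; hence $\text{\rm L}(E)=\text{\rm L}(E)\cdot I\subseteq\text{\rm L}_{KB}(E)$, the reverse inclusion being trivial. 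The implication $2)\Rightarrow 3)$ is immediate from $\text{\rm L}_{KB}(E)\subseteq\text{\rm L}_{qKB}(E)\subseteq\text{\rm L}(E)$. For $3)\Rightarrow 4)$: an element of $\text{\rm r-L}_{qKB}(E)$ is a difference $T_1-T_2$ of positive quasi $KB$ operators; by $3)$ each $T_i$ is a (still positive) $KB$ operator, so $T_1-T_2\in\text{\rm r-L}_{KB}(E)$, and the reverse inclusion is automatic.

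For $4)\Rightarrow 5)$: if $0\le T$ is quasi $KB$, then $T\in\text{\rm r-L}_{qKB}(E)=\text{\rm r-L}_{KB}(E)$, so $T=S_1-S_2$ with $0\le S_1,S_2\in\text{\rm L}_{KB}(E)$; then $0\le T\le S_1$, and the domination property for $KB$ operators yields $T\in\text{\rm L}_{KB}(E)$, giving $(\text{\rm L}_{qKB})_+(E)\subseteq(\text{\rm L}_{KB})_+(E)$ (the opposite inclusion being clear). For $5)\Rightarrow 6)$: a positive compact operator is quasi $KB$ by Proposition~\ref{prop2}, hence $KB$ by $5)$. Finally $6)\Rightarrow 1)$ is the implication $2)\Rightarrow 1)$ of Theorem~\ref{prop3}. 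The main obstacle is precisely this last step, but it has already been carried out in Theorem~\ref{prop3}; the remainder of the argument is bookkeeping with the trivial inclusions together with the two cited facts (the domination property and Proposition~\ref{prop2}).
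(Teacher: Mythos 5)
Your proof is correct and follows essentially the same cyclic chain $1)\Rightarrow 2)\Rightarrow\cdots\Rightarrow 6)\Rightarrow 1)$ as the paper, with the same two substantive ingredients (Proposition~\ref{prop2} for positive compact operators and Theorem~\ref{prop3} for $6)\Rightarrow 1)$). The only difference is that you spell out steps the paper labels obvious, in particular justifying $4)\Rightarrow 5)$ via the domination property of $KB$ operators, which is exactly the fact the paper relies on elsewhere.
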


\begin{proof}
$1) \Rightarrow 2) \Rightarrow 3)$ \
is obvious.\\
$3) \Rightarrow 4)$ \ 
Let $T\in\text{\rm r-L}_{qKB}(E)$, say $T=T_1-T_2$ with 
$T_1,T_2\in(\text{\rm L}_{qKB})_+(E)$. Then
$T_1,T_2\in(\text{\rm L}_{KB})_+(E)$, and hence
$T\in\text{\rm r-L}_{KB}(E)$.\\
$4) \Rightarrow 5)$ \ 
is obvious.\\
$5) \Rightarrow 6)$ \
Let $T$ be a positive compact operator in $E$.
Let $(B_E)_+\ni x_\alpha\uparrow$. By compactness of $T$,
the net $(Tx_\alpha)_\alpha$ contains a norm convergent 
subnet, and, since $Tx_\alpha\uparrow$ then the net $(Tx_\alpha)_\alpha$
is norm convergent. Thus $T\in (\text{\rm L}_{qKB})_+(E)$, and hence 
$T\in (\text{\rm L}_{KB})(E)$.\\
$6) \Rightarrow 1)$ follows from Theorem \ref{prop3}.
\end{proof}
\noindent
In view of Example \ref{quasi KB yet not KB} 
the assumption that the norm in $E$ is order continuous
is essential in Corollary \ref{cor1}. The author 
do not know whether or not the order continuity in Corollary \ref{cor1}
can be replaced by $\sigma$-Dedekind completeness, 
or by Dedekind completeness.

{\normalsize 

}
\end{document}